\documentclass[12pt, leqno]{amsart}
\usepackage{amssymb, amsmath}
\usepackage{latexsym}
\usepackage{graphics}
\usepackage{epsfig}
\usepackage{graphicx}
\usepackage{epstopdf}
\usepackage{color}
\usepackage{url}
\usepackage{amsthm}
\usepackage{cite}
\usepackage{fancyhdr}

\newtheorem{theorem}[subsection]{Theorem}
\newtheorem{proposition}[subsection]{Proposition}
\newtheorem{remark}[subsection]{Remark}
\newtheorem{lemma}[subsection]{Lemma}
\newtheorem{corollary}[subsection]{Corollary}

\newtheorem{definition}[subsection]{Definition}
\newtheorem{example}[subsection]{Example}
\newtheorem{assumption}[subsection]{Assumption}

\newcommand{\R}{\mathbb R}
\newcommand{\Q}{\mathbb Q}

\newcommand{\Z}{\mathbb Z}
\newcommand{\F}{\mathbb F}
\newcommand{\N}{\mathbb N}

\newcommand\cf[1]{\left< #1\right>}

\title[A new attack to RSA]{A new  attack to RSA with small private exponent  and partial information}
\author{Jorge Jim\'enez Urroz}

\begin{document}
\maketitle
\begin{abstract} We give a new algorithm to attack RSA with small private exponent, when some partial information of $p+q$ is given. The algorithm is a very simple modification of the original Wiener's attack with continued fractions, and allows us to factor $n$ whenever $d<n^{\frac{1+\delta}4}$ if we know  a $\delta$-fraction of the most significant bits of $p+q$. The algorithm is unconditional, which is not the case in previous  improvements that use Coppersmith method. As an example, 
our algorithm can be applied to break any cryptosystem with modulus $n$ of $512$ bits and  $d<n^{0.3}$, given an improvement in the original attack of Wiener. 
\end{abstract}

\section{Introduction}

Nowadays the organization of society is based on digital communication  and over $6000$ million people are connected. The efficiency and security needed for   encryption or digital signature on most of the protocols like SSH, PGP, OpenPGP, SSL/TLS, software, browsers etc,   rely on RSA and since it was created in $1978$ many papers have appeared trying to break it in very different scenarios.

\

To build RSA we need a public key an a secret key. The public key is made by two integers $n,e$, where  $n$ is the product of two unknown primes $p,q$,  and $e$ is coprime to $\varphi(n)=(p-1)(q-1)$ the Euler totient function. The secret key consist in the two unknown primes $p,q$ and the integer $d$ inverse of $e$ modulo $\varphi(n)$. Most of the attacks to RSA are based on this fact and use the identity 
\begin{equation}\label{eq:main}
ed-k\varphi(n)=1,
\end{equation} 
valid for some integer $k$, to get information on the secret exponent $d$ and the factors of $n$. 

\

When the difference in computer power between sender and receiver is high, we need to make the private and public exponents $d,e$ small, to make feasible computation in the lighter device. For example in a communication between a smart card and a computer, the smart card needs a short $d$ to decrypt, while the computer needs to have  a short $e$, so the smart card can encrypt. Nowadays is known that the smaller are those exponents, the more insecure is the protocol. A simple example comes with the following scenario: suppose we take  the public exponent to be   $e=3$ and send  the message  to three different people with the same exponent. Then we would have $c_1=m^3\pmod {n_1}$, $c_2=m^3\pmod {n_2}$ and $c_3=m^3\pmod {n_3}$ and then we just have to apply  the chinese remainder theorem to get $m^3\equiv c \pmod {n_1n_2n_3}$ but $m^3<n_1n_2n_3$ so $m^3=c$ and $m=c^{1/3}$.

\

But there is a lot more to study on small exponents. Almost $40$ years ago Wiener  proved in \cite{wiener}, using continued fractions in a clever way on Equation (\ref{eq:main}),  that small private exponents $d<cn^{1/4}$ for some constant $c$   are  insecure, and the attacker can recover the secret key in polynomial time. That paper was seminal and since then, until today, there is a strong interest in discovering attacks to break RSA for exponents $d$ of bigger size. We can find several papers trying to improve even  the constant $c$, up to the current record of  $c=\frac{1}{{18}^{1/4}}$,  given by  Susilo et al. \cite{susi} in $2019$. More information on the constant can be found in \cite{blemay, bodu,converse}. The authors of \cite{susi} also gave an example of $d\le \frac12n^{1/4}+1$ in which the method  does not break the RSA. This restricts  the value of the constant to $c\in(\frac1{2.06},\frac12]$ but also 
proves  that the exponent $1/4$ is the best one can hope, using the method as it was proposed by Wiener.

\

There are other variants of Wiener's attack, \cite{buto,duje,duje2,veti} that allow to break the RSA cryptosystem even if $d$ is few bits longer than the previous bound. The  references  \cite{duje,duje2,veti} are based on an identity of the form $d=rb_{m+1}+sb_m$ where $b_m,b_{m+1}$ are denominators of the convergents of $e/n$, while the  first,  \cite{buto}, uses another integer $n'$ derived from $n$ and construct the convergents of $e/n'$. Concretely considering the integer $n'=\left[n-\left(1+\frac3{2\sqrt 2}\right)n^{1/2}+1\right]$ they get up to $d^2e<8n^{3/2}$. Observe that for $e\sim n$ this increases Wiener's bound up to $d<2\sqrt 2n^{1/4}$. Very recently Trishing proved in \cite{trishin} that a variant of the attack also is able to break the RSA for large exponents $d$, and gives a proportion of $n^{-1/2}$ of weak exponents. 
 
 \
  
 Then, the paper by Boneh-Durfee \cite{bodu} comes, and strikes the community showing an improvement in the exponent, and gets up to $d<n^{0.292}$, at least in theory. Their method has changed completely from Wiener's and creates another school to attack RSA with small private exponent. Their tool is the powerful theorem of Coppersmith \cite{cop} to find small solutions to polynomial equations. Namely they consider $\varphi(n)=A+y$ for $A=(n+1)/2$ known, and $y=-(p+q)/2<n^{1/2}$ unknown, in Equation (\ref{eq:main}), and apply Coppersmith method to 
 $$
 f(x,y)=x(A+y)-1\equiv 0\pmod e,
 $$
 getting a solution for $|x|<n^{\delta}$ where $\delta=1-\frac{\sqrt2}{2}\approx 0.292.$
 
 \
 
Their result is asymptotic and to reach the previous value the dimension of the lattice needed makes the attack infeasible. In practice, they get only to exponents of order $0.28$ for moduli of $1000$ bits, while $0.255$ for moduli of $10000$ bits. There are other improvements of the method,  like  \cite{hemay, fenipa, lizhenqi,taku}, getting  better results in practice, but not  reaching  $d<n^{0.292}$ which remains the state of the art nowadays despite all the efforts.

 \

 But there are  other common ways of attackng  RSA, assuming that the attacker could have an extra information on the secret key. This information  normally comes from certain side channel attacks,  in the presence of convert channels, \cite{partial,timing}. The first of such instances was in \cite{bdf} where the authors, also with lattices,  recover the secret key knowing  only $\log d/4$ of the least significant bits of $d$. In \cite{steinzhen} the authors prove  that the running time  in \cite{bdf} depends exponentially on the least significant bits, LSBS for short, shared by $p$ and $q$. Other works as \cite{sama}, guessing some of the most significant bits,  MSBs for short,  of one factor of $n$ reduce the information needed on $d$, while \cite{ernst} improves the bounds by some refinement of the lattices. Other works using Coppersmith method and partial information on the secret key can be found in \cite{taku2,pehuhu,zhefenipa}. In \cite{zhefenipa} the authors go a bit further and  use Coppersmith method somehow combined with continued fractions,  to get a general exponent depending on the information we know about $e$, $d$ and the MSBs of $p+q$. Concretely if $e<N^\alpha$ and we know a proportion of $1/2-\gamma$ of the MSBs of $p+q$ one can recover any key $d<1-\alpha/3-\gamma/2$.  A generalization of the method to equations of the form $ed-1\equiv 0\pmod{(p^a-1)(q^a-1))}$ appeared very recently in \cite{general}.
 
 \
 
 On the other hand, using only continued fractions,  De Weger’s \cite{deweger} uses the most significant bits, MSBS for short, to prove that for primes with $|p-q|<n^{\beta}$, Wiener's attack will go up to $d<n^{3/4-\beta}$ and \cite{blomay} combine Wiener's attack with Coppersmith's original theorem to factor any modulus such that there is a linear relation of the form $ex+y\equiv 0\pmod \varphi(n)$ with $x$ and $y$ small. The authors in \cite{egypt} knowing and approximation of $p$ of order  $n^\alpha/8$ MSB's get up to $d<n^{(1-\alpha)/2}$ on Wiener's attack, by using convergents of different integers as in \cite{buto}.

 \
 
 

 
 \

It is of fundamental importance to remark now that even though the results using Coppersmith method are stronger, all of them depend on a hypothesis which works in practice, but never has been proved. Concretely the algebraic independence of the solutions given by the reduction algorithm in the lattice. On the other hand, results based on continued fractions are unconditional. 

\

{\bf Our contribution.}  The paper is dedicated to attack RSA, find the private key and factor the modulus, when extra information on the factors  is leaked. We will only use continued fractions and hence all the results are unconditional. In our main theorem we prove that  knowing an approximation $S$ of $p+q$, so that $p+q-S<n^{(1-\delta)/2}$, is enough to get the private exponent whenever $d<n^{\frac{1+\delta}4}$.  

\

We dedicate a section  to prove that an approximation of  $p+q$ is equivalent to knowing an approximation of  $p$ and $q$ separately, and it is in fact another way of expressing the knowledge of the MSBs of the corresponding integer. In this way we can  recover the results  in \cite{egypt}, in terms of approximations of $p+q$. We get the same result as they when $e\approx n$, but removing also the constant $1/8$ from that paper, and generalize it to any range of $e$. Also,  we need  to emphasize that  our method is extremely simple in comparison with them, which makes computation much faster than \cite{egypt}, since we only consider the computation of the convergents of a precise  fraction. 

\

The  estimation in our main result  is worse than in   \cite{zhefenipa}. However, we should mention that their results, apart from being conditional to finding algebraically independent solutions on the lattice reduction,  also use an extra hypothesis which seems unlikely to occur in certain range of the parameters that they consider. Concretely their results are based on the size of  the inverse of a particular integer $q_r$ modulo $q_{r-1}$, which normally will be of the same size as the modulus.

\

We should mention that the authors themselves mention that they find  a gap between their experiments and their theoretical results, without explaining why.



\

In Section \ref{cf} we include the necesary preliminaries on continued fractions. Then in Section \ref{results} we state and prove the results. Section \ref{approx} is dedicated to analyze the relation between approximation of an integer and thier MSBs, for $p+q$ and $p$ and $q$. Finally we include some experiments in Section \ref{experiments}. Throughout the paper, given two integers $a,b$ $[a,b]$ will denote thier least common multiple while $(a,b)$ will be their greatest common divisor.

\section{Preliminaries}\label{cf}

\medskip

To attack RSA with small $d$ we will use continued fractions. 
\begin{definition} Given a real number $\alpha$ the continued fraction of $\alpha$ is an expression of the form
\[
\alpha=
q_0+\cfrac{1}{q_1+\cfrac{1}{q_2+\cfrac{1}{q_3+\cfrac{1}{q_4+
  \cfrac{1}{q_5+\cfrac{1}{q_6+\cfrac{1}{q_7+\cfrac{1}{q_8+
  \cdots\vphantom{\cfrac{1}{1}} }}}}}}}}
\]
where $q_0=\left[\alpha\right]$, $r_0=\left\{\alpha\right\}$, $q_i=\left[1/r_{i-1}\right]$,  $r_i=\left\{1/r_{i-1}\right\}$ for $i\ge 1$, until $r_m=0$, which will occur exactly when $\alpha\in\Q$.  In this case the continued fraction is finite and we denote it as $\alpha=\cf{q_0,q_1,\dots,q_m}$
\end{definition}
\begin{remark} Note that $q_m\ge 2$ since otherwise $r_{m-1}=1$ which is impossible. 
\end{remark} 
  \
 
We can construct the continued fraction in the following way.  Let 
$\frac{n_i}{d_i}=\cf{q_0,\dots,q_{i}}$. Then one can prove \cite[Theorem 1]{kin} that 
\begin{equation}\label{eq:nd}
\begin{array}{ l l}
n_0=q_0, & d_0=1\\
n_1=q_0q_1+1,& d_1=q_1\\
n_i=q_in_{i-1}+n_{i-2}, &d_i=q_id_{i-1}+d_{i-2},\quad \text{for any }  2\le i\le m,\\
\end{array}
\end{equation}
and  \cite[Theorem 2]{kin}
\begin{equation}
n_id_{i-1}-d_{i}n_{i-1}=(-1)^{i+1}  \quad \text{for any }  1\le i\le m.
\end{equation}
This in particular implies 
\begin{equation}\label{eq:coprim}
(n_i,d_i)=1, 
\end{equation}
for all $0\le i\le m$.

\

We will also need to control the growth of the convergents. Concretely we have
\begin{theorem}\label{th:grow} Let $\alpha=\cf{q_0,\dots,q_{m}}$ and let  $\frac{n_i}{d_i}$ its convergents for $i=0,\dots, m$. Then for any $i\ge 4$,
$$
n_i>2^{\frac{i-1}{2}}.
$$
\end{theorem}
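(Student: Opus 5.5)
The plan is a Fibonacci-type induction on the recurrence \eqref{eq:nd}. Throughout I assume $\alpha\ge 0$, so that $q_0=[\alpha]\ge 0$; this covers the application to fractions such as $e/n$, where $q_0$ may be $0$. For $i\ge1$ we have $q_i=[1/r_{i-1}]\ge 1$, because $0<r_{i-1}<1$. The statement is only meaningful for $4\le i\le m$, so I work in that range.

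\emph{Step 1: nonnegativity and monotonicity.} From \eqref{eq:nd}, $n_0=q_0\ge0$ and $n_1=q_0q_1+1\ge1$. An immediate induction gives $n_i\ge 1$ for all $1\le i\le m$. For $i\ge2$ this yields
$$n_i=q_in_{i-1}+n_{i-2}\ge n_{i-1}+n_{i-2}\ge n_{i-1},$$
using $q_i\ge1$ and $n_{i-2}\ge 0$. Hence the sequence $n_1\le n_2\le\cdots\le n_m$ is nondecreasing.

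\emph{Step 2: base cases $i=4,5$.} Take the worst case $q_0=0$; a larger $q_0$ only increases every $n_i$, since all terms in the recurrence are nonnegative. Then $n_1\ge1$ and $n_2=q_2n_1+n_0\ge1$. Next, $n_3\ge n_2+n_1\ge2$. This gives
$$n_4\ge n_3+n_2\ge 3>2^{3/2}\approx2.83, \qquad n_5\ge n_4+n_3\ge5>2^{2}.$$
These are precisely the two cases where the crude bound is tight, and the reason the statement starts at $i=4$. I will recheck these small-index bounds carefully, since that is where an off-by-one slip could break the claim; the rest is routine.

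\emph{Step 3: induction.} For $i\ge6$, assume $n_{i-2}>2^{(i-3)/2}$. By Step 1,
$$n_i\ge n_{i-1}+n_{i-2}\ge 2n_{i-2}>2\cdot 2^{(i-3)/2}=2^{(i-1)/2}.$$
Inducting separately along even and odd indices from the base cases $i=4$ and $i=5$ completes the proof.
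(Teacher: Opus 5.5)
Your proof is correct and follows essentially the same route as the paper. It uses the same base cases $n_3\ge 2$, $n_4\ge 3$, $n_5\ge 5$ and the same two-step induction $n_i\ge n_{i-1}+n_{i-2}\ge 2n_{i-2}>2^{(i-1)/2}$ for $i\ge 6$. Your explicit restriction to $\alpha\ge 0$ (so $q_0\ge 0$) spells out a hypothesis the paper relies on implicitly when it asserts $n_i\ge 1$.
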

\begin{proof}  We will prove it by induction. We know that for $i\ge 1$,  $q_i\ge 1$,  and   $n_i\ge 1$.  Hence, noting that $n_3\ge n_2+n_1\ge 2$, we get  the first two cases  $n_4\ge n_3+n_2\ge 3>2^{3/2}$ and $n_5\ge n_4+n_3\ge 5>2^{4/2}$.
Now, suppose $n_{i-2}>2^{\frac{i-3}{2}}.$ Then, by (\ref{eq:nd}) for $i\ge 6$ 
$$
n_{i}=q_in_{i-1}+n_{i-2}\ge n_{i-1}+n_{i-2}\ge  n_{i-2}+n_{i-3}+n_{i-2}> 2n_{i-2}>2^{(i-1)/2}.
$$
\end{proof}
\begin{remark} See  \cite[Theorem 12]{kin} where the inequality is proved for the denominators of the convergents. We did not find a reference including the previous inequality for the numerators.
\end{remark}

\

The key fact on Wiener's method to attack RSA comes with the following lemma. 
\begin{lemma}[Wiener]\label{lem:wiener}  Let $\alpha=\cf{q_0,\dots,q_m}$, and  $\alpha'\in\Q$ such that $\alpha=\alpha'(1-\delta)$ for $\delta<\frac{2}{3n_md_m}$, as in (\ref{eq:nd}). Then  one of the convergents of $\alpha$ is $\alpha'$.
\end{lemma}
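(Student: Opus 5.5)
Throughout, $\delta>0$, i.e. $\alpha<\alpha'$, which is the situation of (\ref{eq:main}). Write $\alpha'=a/b$ in lowest terms, and expand $a/b$ as a finite continued fraction. I read the quantity $n_md_m$ in the bound as the product of the numerator and denominator of this common fraction, so $n_m=a$ and $d_m=b$. The strategy is Wiener's: express $\alpha$ as the fraction $a/b$ with one extra partial quotient appended, and show that this partial quotient is at least $1$. Then the expansion of $\alpha$ starts with that of $\alpha'$, so $\alpha'$ appears among the convergents of $\alpha$.

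\textbf{Step 1 (parity).} A finite continued fraction has two expansions, $\cf{q_0,\dots,q_m}$ and $\cf{q_0,\dots,q_m-1,1}$, and their lengths differ by one. I choose the one whose length makes $n_md_{m-1}-d_mn_{m-1}=(-1)^{m+1}$ equal to $+1$. Here the $n_i/d_i$ are the convergents of $a/b$, built by (\ref{eq:nd}).

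\textbf{Step 2 (defining $x$).} Define $x$ by
$$\alpha=\frac{n_mx+n_{m-1}}{d_mx+d_{m-1}}.$$
A direct computation, using the determinant identity and the parity from Step 1, gives
$$\alpha'-\alpha=\frac{1}{d_m(d_mx+d_{m-1})}.$$
Dividing by $\alpha'=n_m/d_m$ then yields
$$\delta=\frac{1}{n_m(d_mx+d_{m-1})}.$$
So the hypothesis on $\delta$ translates into a lower bound for $x$, namely $d_mx+d_{m-1}>\tfrac32 d_m$.

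\textbf{Step 3 (conclusion).} If $x\ge 1$, then $\alpha=\cf{q_0,\dots,q_m,x}$ is a legitimate continued fraction, with the tail $x$ expanded further. By uniqueness of expansions, $n_m/d_m=\alpha'$ is a convergent of $\alpha$.

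\textbf{Main obstacle.} The bound from Step 2 only gives $x>\tfrac32-d_{m-1}/d_m$, which is $\ge\tfrac12$ but not necessarily $\ge1$. The difficult case is $\tfrac12<x<1$, where the constant $\tfrac23$ of the statement matters.

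\begin{itemize}
\item In this case $1<1/x<2$. I would rewrite the tail so that $q_m+1/x$ is absorbed as a new last quotient: the expansion of $\alpha$ then begins $\cf{q_0,\dots,q_{m-1},q_m+1,\dots}$.
\item Then I would compare with the other parity choice from Step 1. If $q_m\ge2$, the alternative expansion $\cf{q_0,\dots,q_m-1,1}$ is available, and a short check with the recurrences (\ref{eq:nd}) should show that $\alpha$ and $\alpha'$ still share all of $\alpha'$'s partial quotients, so that $n_m/d_m$ is again a convergent of $\alpha$.
\item If this case analysis fails, I would fall back on Legendre's criterion $|\alpha-a/b|<1/(2b^2)$. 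That criterion would need $\delta<1/(2n_md_m)$, and I would then absorb the discrepancy between $\tfrac12$ and $\tfrac23$ by using the sign information $\alpha<\alpha'$ together with $d_{m-1}<d_m$.
\end{itemize}
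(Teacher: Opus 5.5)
\textbf{The lemma is false in the case you call the main obstacle, so that case cannot be closed.}

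Your reading of $n_md_m$ as numerator times denominator of $\alpha'$ is the right one; it is how the lemma is used in the theorem, with $n_md_m=kd$. Steps 1--3 work when the odd-index expansion of $\alpha'$ is its canonical one. Then $q_m\ge 2$ gives $d_{m-1}\le d_m/2$, so $x>1$ strictly. Step 3 does need $x>1$: for $x=1$ the algorithm produces $\cf{q_0,\dots,q_m+1}$ and $\alpha'$ is missed.

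In the remaining case, $x\le 1$ forces $q_m=1$, so canonically $\alpha'=\cf{q_0,\dots,q_{m-1}+1}$. Your first bullet shows $\alpha=\cf{q_0,\dots,q_{m-1},1+1/x}$ with $2\le 1+1/x<3$. So the expansion of $\alpha$ is $\cf{q_0,\dots,q_{m-1},2,\dots}$. By uniqueness of expansions, $\alpha'$ is not among its convergents, and the ``short check'' in your second bullet must fail.

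Concretely, take $\alpha'=\frac34=\cf{0,1,2,1}$, so $n_md_m=12$, and $\delta=\frac1{20}<\frac1{18}=\frac{2}{3n_md_m}$. This gives $\alpha=\frac{57}{80}=\cf{0,1,2,2,11}$, with your $x=\frac{11}{12}$. The convergents of $\alpha$ are $0,1,\frac23,\frac57,\frac{57}{80}$, and $\frac34$ is not among them.

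The Legendre fallback cannot be sharpened to $\frac23$ either. Using the sign information, $\alpha'$ is a convergent of $\alpha$ if and only if $x>1$. That is, $\delta<\frac{1}{n_m(d_m+d_{m-1})}$ for the odd-index expansion. In this case $d_{m-1}>d_m/2$, so the threshold is strictly below $\frac{2}{3n_md_m}$; in the example it is $\frac1{21}$.

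\textbf{What your computation actually proves.} It proves either of two corrected statements.

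\emph{Keep the hypothesis.} With $\delta<\frac{2}{3n_md_m}$ you get a dichotomy. If $x>1$, then $\alpha'$ is a convergent of $\alpha$. If $\frac12<x\le 1$, then $\alpha'=\cf{q'_0,\dots,q'_{i-1},q'_i+1}$ with $i=m-1$ even, where the $q'_j$ are the partial quotients of $\alpha$. That modified guess is exactly what Wiener's algorithm tests at even steps. The paper's proof only cites Wiener, and the constant $\frac23$ is valid for that algorithm, not for plain convergents.

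\emph{Keep the conclusion.} To keep ``$\alpha'$ is a convergent'', strengthen the hypothesis to $\delta<\frac{1}{2n_md_m}$. Then your Step 2 gives $d_mx+d_{m-1}>2d_m$, hence $x>1$, with no case analysis.

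The paper's algorithm tests only convergents. So the constant $\frac23$ in the proof of the theorem must be replaced by $\frac12$, or the algorithm must also test the modified guesses.
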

\begin{proof} See \cite[p. 555, (22)]{wiener}.
\end{proof}

We end this section with a simple lemma on the Euler function. We include the proof for convenience.
\begin{lemma}\label{lem:fi} Let $n=pq$ with $q<p<2q$. Then 
\begin{equation}\label{lem:fi}
n+1-\frac{3}{\sqrt 2}\sqrt n<\varphi(n)<n+1-2\sqrt n
\end{equation}
\end{lemma}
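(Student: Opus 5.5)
Since $\varphi(n)=(p-1)(q-1)=n+1-(p+q)$, the statement is equivalent to the two-sided bound
$$
2\sqrt n< p+q<\frac{3}{\sqrt 2}\sqrt n ,
$$
and I would prove these two inequalities separately, working only with the quantity $p+q$.

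For the lower bound I use AM--GM: $(p+q)^2-4pq=(p-q)^2>0$ because $p\neq q$ (indeed $q<p$). Hence $(p+q)^2>4n$, so $p+q>2\sqrt n$, which gives $\varphi(n)<n+1-2\sqrt n$.

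For the upper bound I set $t=p/q\in(1,2)$, so that $p=\sqrt{nt}$ and $q=\sqrt{n/t}$. Then
$$
p+q=\sqrt n\left(\sqrt t+\frac{1}{\sqrt t}\right).
$$
The function $g(t)=\sqrt t+1/\sqrt t$ satisfies $g'(t)=\tfrac12 t^{-3/2}(t-1)>0$ for $t>1$, so $g$ is strictly increasing on $(1,2)$. Therefore $g(t)<g(2)=\sqrt2+\tfrac1{\sqrt2}=\tfrac{3}{\sqrt2}$, and so $p+q<\tfrac{3}{\sqrt2}\sqrt n$, which gives $\varphi(n)>n+1-\tfrac{3}{\sqrt2}\sqrt n$.

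There is no real obstacle here. The only point needing care is that both inequalities are strict, and this follows from the strict hypotheses $q<p$ and $p<2q$. As a fallback for the upper bound that avoids calculus, one can use the identity $(p+q)^2=n(t+2+1/t)$ and note that $t+1/t<5/2$ on $(1,2)$, since $t^2-\tfrac52 t+1=(t-2)(t-\tfrac12)<0$ there. This gives $(p+q)^2<\tfrac92 n$, the same bound.
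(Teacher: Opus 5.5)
Your proof is correct and is essentially the paper's argument: the paper also writes $\varphi(n)=n+1-(p+q)$ and uses that $f(p)=p+n/p$ is increasing for $p>\sqrt n$ on the range $\sqrt n<p<\sqrt{2n}$, which is your monotonicity of $g(t)=\sqrt t+1/\sqrt t$ under the substitution $p=\sqrt{nt}$. Your AM--GM lower bound is just the left-endpoint value of the same function, and your handling of strictness is a bit more careful than the paper's, which writes the non-strict bounds $2\sqrt n\le f(p)\le \frac{3}{\sqrt 2}\sqrt n$ before stating the strict inequalities.
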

\begin{proof}
The function $f(p)=p+\frac np$ is increasing for $p>\sqrt n$, so under the hypothesis we get the lower bound $\sqrt n\le p$ and,  multiplying by $p$ in $p<2q$, the upper bound $p^2<2n$, so $2\sqrt n\le f(p)\le \frac{3}{\sqrt 2}\sqrt n$. The result follows by noting that $\varphi(n)=n+1-f(p)$.
\end{proof}

\section{Wiener's approach revisited} \label{results}

\begin{theorem} Let $n$ be an RSA modulus $n=pq$, with prime factors $q<p<2q$, and suppose  $p+q-1=x+R$ where $x$ is known. Let $e$ be  an
 integer coprime with $\varphi(n)$ and $d=e^{-1}\pmod {\varphi(n)}$, verifying (\ref{eq:main}). Then, if 
$d<(\sqrt{2/3}n-\sqrt {3n})(eR)^{-1/2}$ we can factor $n$ in polynomial  time in $\log (en)$.
\end{theorem}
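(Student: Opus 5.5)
Write $\varphi(n)=n-(p+q)+1=n-x-R$, so with $N':=n-x$ (known) we have $\varphi(n)=N'-R$. The plan is to run Wiener's continued fraction attack on $e/N'$ instead of $e/n$, and to show that $k/d$ appears among its convergents. Once $k$ and $d$ are known, $\varphi(n)=(ed-1)/k$ is determined, hence $p+q=n+1-\varphi(n)$, and $p,q$ are the roots of a quadratic. Computing the continued fraction of $e/N'$ takes $O(\log(en))$ steps by Theorem \ref{th:grow}, and each candidate convergent is tested in polynomial time.

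To locate $k/d$, I start from (\ref{eq:main}), $ed-k(N'-R)=1$, which gives
$$
\frac{e}{N'}=\frac{k}{d}\Bigl(1-\frac{kR-1}{kN'}\Bigr)\cdot\Bigl(\text{an exact rewriting}\Bigr),
$$
more precisely $\frac{e}{N'}=\frac{k}{d}\bigl(1-\epsilon\bigr)$ with $\epsilon=\frac{kR-1}{kN'}\le \frac{R}{N'}$. I will apply Lemma \ref{lem:wiener} with $\alpha=e/N'$ in reduced form and $\alpha'=k/d$. The lemma as stated compares $\delta$ with $\frac{2}{3n_md_m}$, where $n_m/d_m$ is $\alpha'$ in Wiener's formulation; here $(k,d)=1$ holds by (\ref{eq:main}). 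So it suffices to show
$$
\frac{R}{N'}<\frac{2}{3kd}.
$$
If $R$ is negative, I would instead use $|\epsilon|$ together with the symmetric version of Wiener's criterion, $|\alpha-\alpha'|<1/(2d^2)$, via Legendre. I will treat $R\ge 0$ as the main case, as the statement's $(eR)^{-1/2}$ suggests.

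Next I bound $k$. From (\ref{eq:main}), $k=(ed-1)/\varphi(n)<ed/\varphi(n)$. Substituting, the sufficient condition becomes $\frac{3}{2}\,ed^2R<N'\varphi(n)$. By Lemma \ref{lem:fi}, $\varphi(n)>n+1-\frac{3}{\sqrt2}\sqrt n$. Also $N'=\varphi(n)+R\ge\varphi(n)$, so it suffices that $d^2<\frac{2}{3}\varphi(n)^2/(eR)$, i.e. $d<\sqrt{2/3}\,\varphi(n)(eR)^{-1/2}$. Finally I need $\sqrt{2/3}\,\varphi(n)\ge \sqrt{2/3}\,n-\sqrt{3n}$, which follows from $\sqrt{2/3}\cdot\frac{3}{\sqrt2}\sqrt n=\sqrt{3n}$ and the lower bound in Lemma \ref{lem:fi}. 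This matches the hypothesis exactly, which confirms the intended route.

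The main obstacle is the precise form of Lemma \ref{lem:wiener}: whether its $n_md_m$ refers to $\alpha'$ or $\alpha$, and how the sign of $\delta$ is handled. I expect to check this against Wiener's (22) and to make sure the error term $\epsilon$, which is positive when $R>0$, fits the lemma's hypothesis. The remaining bookkeeping is routine: discarding the $-1$'s in favourable directions and using $N'\ge\varphi(n)$.
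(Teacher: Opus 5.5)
Your proposal is the paper's proof essentially step for step: Wiener's lemma applied to $e/(n-x)$ with relative error $\frac{R-1/k}{n-x}$ and $n_md_m$ read as $kd$ (exactly how the paper uses it), then $kd<ed^2/\varphi(n)$, $n-x\ge\varphi(n)$ and $\varphi(n)>n+1-\frac{3}{\sqrt2}\sqrt n$ to reach $\sqrt{2/3}\,n-\sqrt{3n}$. On the obstacle you flag: $\delta>0$ is precisely Wiener's setting, but with the constant $\frac23$ his criterion (22) only guarantees that $k/d$ is one of his guesses, namely a convergent or $\cf{q_0,\dots,q_{i-1},q_i+1}$ for some even $i$ (a point the paper's proof also glosses over), so you should test those extra fractions too, which keeps the running time polynomial, rather than fall back on Legendre's $\frac{1}{2d^2}$ criterion, which would only give the bound with $\sqrt{1/2}$ in place of $\sqrt{2/3}$.
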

\begin{proof} Dividing by $d(n-x)$ in (\ref{eq:main}) we get  
\begin{eqnarray}
\nonumber\frac{e}{n-x}&=&\frac{1}{d(n-x)}+\frac{k}{d}\frac{(p-1)(q-1)}{(n-x)}=\frac{k}{d}\left(\frac{n-p-q+1+\frac 1k}{n-x}\right)\\
&=&\frac{k}{d}\left(1-\frac{R-\frac{1}{k}}{n-x}\right).
\end{eqnarray}

 Hence, by Lemma \ref{lem:wiener} if $\frac{R-\frac{1}{k}}{n-x}<\frac{1}{3/2kd}$, or equivalently
 \begin{equation}\label{eq:bound}
kd<\frac23\frac{n-x}{R-1/k},
 \end{equation} 
 one of the convergents of $\frac{e}{n-x}$ is $\frac{k}{d}$. Again Equation (\ref{eq:main}) gives trivially $ed^2=kd\varphi(n)+d>kd\varphi(n)$, or $kd<\frac{ed^2}{\varphi(n)}$, so in particular if we have $d<(\sqrt{\frac 23}n-\sqrt {3n})(eR)^{-1/2}$, then 
 $ d<\sqrt{2/3}\varphi(n)(eR)^{-1/2}$ by Lemma \ref{lem:fi}, 
  and we get
 $$
 kd<\frac{ed^2}{\varphi(n)}<\frac23\frac{\varphi(n)}R<\frac23\frac{n-x}{R-1/k},
 $$
 and hence one of the convergents of $\frac{e}{n-x}$, say $\frac{n_i}{d_i}$ is $\frac{k}{d}$. Noting that $(n_i,d_i)=(k,d)=1$ by (\ref{eq:coprim}) and (\ref{eq:main}), we get $k=n_i$ and $d=d_i$. Then, we compute $\varphi(n)$ by (\ref{eq:main}), and from there the factors $p,q$.
 
 \
 
 By Theorem \ref{th:grow}, $e=n_m>2^{(m-1)/2}$, so we need at most $O(\log e)$ steps to find $\frac kd$, and  on each step  we perform one multiplication and one addition by \ref{eq:nd} with integers bounded by $e,n-x$, since the sequence of numerators and denominators are increasing with limit $e$, and $n-x$ respectively. 
\end{proof}

\begin{equation}\label{eq:alg}
\text{{\bf Algorithm}}
\end{equation}

\medskip

Input: $e,n,x$

Output: $d,p,q$

\medskip

for $i=1$ until $m$  we:

 \hskip10pt Compute $n_i$, $d_i$ the convergents of $\frac{e}{n-x}$.
 
 \hskip10pt  Compute $L=\frac{ed_i-1}{n_i}$. This is our candidate of $\varphi(n)$
 
 \hskip10pt Solve $X^2-(n-L)X+n=0$ to get the roots $x_1,x_2$ 
 
 \hskip10pt If $x_1, x_2\in\N$ and $x_1x_2=n$ 
 
 \hskip20pt	return ($d_i,x_1,x_2$)

\begin{remark} We should emphisize that the extra information needed on $p+q$ would be unnecessary if the size of the modulus is not too  big.  In particular since the algorithm runs in $O(\log en)$, one could repeat it $O(\sqrt n/2^l)$ times with a guess of $x=2^lA$ for  $2\sqrt n/2^l\le A<3/\sqrt 2\sqrt n/2^l$ and increasing $A$ by $1$  each time. This would give an algorithm running on $O(\sqrt n/2^l \log (en))$, which still is feasible for $l$  large enough. For example if $n$ is a modulus of $512$-bits, then $l=200$ would mean that we repeat the algorithm  $2^{56}$ times, and we capture $56$ bits of $p+q$ which is more than $1/10$ of the bits of $n$, and we could factor $n$ whenever $d<0.3$ with no previous information at all on the primes factors of $n$.

\end{remark}
\section{$\delta$-approximations}\label{approx}

In this section we establish, as we mentioned in the introduction, that knowing an approximation of  $p$ and $q$ is equivalent to knowing an approximation  of $p+q$.  Before proving it,   we will make a concrete definition for convenience in the exposition. Along the whole section we will denote all the constants by $c$, or $C$, but on each step of the argument could have different values.

\begin{definition} Let  $m$ be an  unknown integer and $\delta>0$. We say that the integers $K,U$ are a $\delta$-approximation of $m$ if $m=K+U$, $K$ is known and $|U|<cm^{1-\delta}$ for some constant $c$.
\end{definition}

We include two straighforward  lemmas for convenience.
\begin{lemma} Having a $\delta$-approximation of an integer $m$ is equivalent to  knowning the MSBs of $m$. 
\end{lemma}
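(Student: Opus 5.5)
Since the statement is informal, the plan is to first make it precise and then prove the two directions. Fix $m$ with $2^{L-1}\le m<2^L$, so $L=\lfloor\log_2 m\rfloor+1$ is the bit length. ``Knowing the MSBs of $m$'' will mean knowing the top $t$ bits of $m$ with $t\ge \delta L+O(1)$. Equivalently, we know the integer $M=\lfloor m/2^{L-t}\rfloor$, and then
$$
m=2^{L-t}M+U_0,\qquad 0\le U_0<2^{L-t}.
$$
I will show that this is the same, up to the constant $c$ in the definition, as having a $\delta$-approximation of $m$.

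\emph{MSBs $\Rightarrow$ $\delta$-approximation.} Put $K=2^{L-t}M$, which is known, and $U=U_0$. With $t\ge \delta L$ we get $|U|<2^{L-t}\le 2^{(1-\delta)L}$. Since $2^L\le 2m$, this gives $2^{(1-\delta)L}\le 2^{1-\delta}m^{1-\delta}$. So $(K,U)$ is a $\delta$-approximation with $c=2^{1-\delta}\le 2$.

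\emph{$\delta$-approximation $\Rightarrow$ MSBs.} Suppose $m=K+U$ with $K$ known and $|U|<cm^{1-\delta}$. I first check that $L$ is determined up to $O(1)$ choices. This holds because $K=m(1+O(m^{-\delta}))$, so $\log_2 K$ and $\log_2 m$ differ by $o(1)$ for large $m$. Next, set $s=\lceil\log_2(c\,m^{1-\delta})\rceil\le (1-\delta)L+O(1)$. The interval $[K-2^s,K+2^s]$ contains $m$ and has length $2^{s+1}$. All integers in it therefore share their bits above position $s+1$, with one exception: a carry across a multiple of $2^{s+1}$. So the numbers $\lfloor m/2^{s+1}\rfloor$ and $\lfloor m/2^{s+2}\rfloor$ take at most two candidate values, namely $\lfloor (K\pm 2^s)/2^{s+1}\rfloor$. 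This recovers the top $L-s-1\ge \delta L-O(1)$ bits, up to one binary ambiguity, or a constant number of guesses.

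\emph{Main obstacle.} The hard step is this carry issue. Knowing $m$ up to an additive error does not literally pin down its leading bits: for example, $m=2^{j}\pm 1$ changes every bit. I will handle it by allowing $O(1)$ candidate values, or by losing a constant number of bits. The formulation ``equivalent'' is then read as: each kind of knowledge gives the other with $\delta$ changed by $O(1/L)$ and the constant $c$ changed by a bounded factor. This is harmless for the algorithm, because we can simply run it on each candidate.
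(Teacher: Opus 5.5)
Your argument is correct, and its skeleton is the paper's. The forward direction is the same truncation $m=2^lA+B$. The reverse direction compares the quotient of $m$ by a power of two exceeding the error with the corresponding quotient of $K$.

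Where you differ is the conclusion of the reverse direction, and there your version is the accurate one. The paper takes $2^l>|U|$ and asserts $\lfloor m/2^l\rfloor=\lfloor K/2^l\rfloor-\varepsilon$, with $\varepsilon=1$ if $U<0$ and $\varepsilon=0$ if $U>0$. This fails in both cases. With $2^l=8$, $K=13$, $U=-1$ gives $\lfloor m/8\rfloor=\lfloor K/8\rfloor$ although $U<0$. And $K=15$, $U=2$ gives $\lfloor m/8\rfloor=\lfloor K/8\rfloor+1$. Moreover, the sign of $U$ is not known to the attacker. The inequality $K/2^l-1<m/2^l<K/2^l+1$ only confines $\lfloor m/2^l\rfloor$ to three consecutive values.

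Your choice of block size $2^{s+1}\ge 2|U|$ reduces this to two. Your reading of ``equivalent'' is exactly what can actually be proved: up to $O(1)$ candidates, with $\delta$ shifted by $O(1/L)$ and $c$ by a bounded factor. As you note, this suffices for the attack, since the algorithm can be rerun on each candidate.

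Three small repairs:
\begin{itemize}
\item Your alternative of ``losing a constant number of bits'' does not remove the carry ambiguity. Your own example $2^j\pm 1$ shows that no fixed truncation helps, so enumerating the candidates is the fix, not one option of two.
\item $s$ as written depends on the unknown $m$. Define it from $K$ instead: for large $m$ one has $K/2<m<2K$, so this changes $s$ by only $O(1)$.
\item The candidates for $\lfloor m/2^{s+2}\rfloor$ are $\lfloor (K\pm 2^s)/2^{s+2}\rfloor$, not the expressions with denominator $2^{s+1}$.
\end{itemize}
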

\begin{proof}Concretely,  if we know the $L$ MSBs of an integer $m$, then we can write $m=2^lA+B$, where $l=[\log_2m]-L$, $A$ is a known integer  and $0\le B<2^l<m^{1-\frac{L}{\log_2m}}$, so we have a $\frac{L}{\log_2m}$-approximation of $m$. On the other hand, if $m=K+U$ with $|U|<cm^{1-\delta}<2^l$, then we have the inequalities $-1+\frac{K}{2^l}<\frac{m}{2^l}<\frac{K}{2^l}+1$, so 
$$
\left[\frac{m}{2^l}\right]=\left[\frac{K}{2^l}\right]-\varepsilon,
$$
where $\varepsilon=1$ if $U<0$, while $\varepsilon=0$ if $U>0$. Hence, 
$$
m=\sum_{j=0}^{[\log_2m]}m_j2^j=2^l\left[\frac{m}{2^l}\right]+\sum_{j=0}^{l}m_j2^j=2^l\left(\left[\frac{K}{2^l}\right]-\varepsilon\right)+B,
$$
where $0\le B<2^l$, and we know the $[\log_2m]-l$ MSBs of $m$.
\end{proof}
\begin{lemma}\label{lem:either}  Let the public integer $n=pq$  be the product of two  unknown prime numbers  $q<p<2q$ large enough. Having a  $\delta$-approximation of either $p$ or $q$ is equivalent to having a $\delta$-approximation of both $p$ and $q$.
\end{lemma}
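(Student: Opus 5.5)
One direction is trivial: a $\delta$-approximation of both $p$ and $q$ contains in particular a $\delta$-approximation of each. So the plan is to prove the converse, and by symmetry to treat the case where we know $p=K+U$ with $|U|<cp^{1-\delta}$. The idea is to take $K'=[n/K]$ as the known part of $q$ and to show that $q-K'$ is $O(q^{1-\delta})$. This gives a $\delta$-approximation of $q$ with a possibly different constant, which the convention of the section allows.

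The first step is to check that $K$ is comparable to $p$. Since $|U|<cp^{1-\delta}=o(p)$ when $p$ is large enough, we get $p/2<K<2p$. The second step is to write
$$
q-\frac nK=\frac{n}{p}-\frac{n}{K}=\frac{n(K-p)}{pK}=-\frac{qU}{K},
$$
so that
$$
\left|q-\frac nK\right|<\frac{2q}{p}\,c\,p^{1-\delta}<2c\,p^{1-\delta}.
$$
Because $q<p<2q$, we have $p^{1-\delta}<2^{1-\delta}q^{1-\delta}\le 2q^{1-\delta}$. Hence $|q-n/K|<4c\,q^{1-\delta}$. The third step is to pass to the integer part: setting $K'=[n/K]$, which is computable from the public $n$ and the known $K$, we have $U'=q-K'$ with $|U'|<4cq^{1-\delta}+1<Cq^{1-\delta}$. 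So $(K',U')$ is a $\delta$-approximation of $q$.

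If instead $q$ is known approximately, the argument is the same with the roles of $p$ and $q$ swapped. The factor $p/q<2$ and the bound $q^{1-\delta}<p^{1-\delta}$ again give only constants.

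The only delicate point is making sure the constants stay uniform, which requires $K\asymp p$; this is exactly where the hypotheses "$q<p<2q$" and "large enough" are used. Nothing else is an obstacle.
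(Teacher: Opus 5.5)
Your argument is correct and is essentially the paper's proof: take $n/K$ as the known part of the other prime, and bound $|n/K-n/p|=n|U|/(pK)$ using $K\asymp p$ and $q<p<2q$. Rounding to $K'=[n/K]$ is a small improvement, since the definition requires integers while the paper leaves $K_q=n/K_p$ unrounded.
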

\begin{proof} Obviously we just have to prove one implicaiton. Now,  note that $n=pq<\min\{p^2, 2q^2\}$. Then If $p=K_p+U_p$ for $|U_p|<Cp^{1-\delta}$, then $|U_p|<cp$, for some $c<1$ and $p$ large enough so $K_p>(1-c)p$ and 
$$
\left|\frac{n}{K_p}-\frac{n}{p}\right|=n\frac{|p-K_p|}{pK_p}<\frac{C}{1-c}qp^{-\delta}<\frac{C}{1-c}q^{1-\delta},
$$
so $q=K_q+ U_q$ where $K_q=\frac nK_p$ and $|U_q|<\frac{C}{1-c} q^{1-\delta}$. On the opposite, If we start  knowing $q=K_q+U_q$ for $|U_q|<Cq^{1-\delta}$, the reasoning is analogous.

\end{proof}
We now can prove our main result of apprximations.
\begin{proposition}\label{kbits}  Let the public integer $n=pq$  be the product of two  unknown prime numbers  $q<p<2q$.  Suppose $p+q=K+U$, is a $\delta$-approximation of $p+q$  with $K\ge \kappa \sqrt n$ for some $\kappa>2$. Then one can find a $\delta$-approximation of $p$ and a $\delta$-approximation of $q$ in polynomial time, and viceversa.
\end{proposition}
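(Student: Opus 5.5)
Write $s=p+q$; then $p$ and $q$ are the roots of $X^2-sX+n$, so $p=\frac{s+\sqrt{s^2-4n}}2$ and $q=\frac{s-\sqrt{s^2-4n}}2$. For the direction from $p+q$ to the factors, I would put $K_p=\frac{K+\sqrt{K^2-4n}}2$ and $K_q=\frac{K-\sqrt{K^2-4n}}2$, rounded to integers, and bound $|p-K_p|$ and $|q-K_q|$. The hypothesis $K\ge\kappa\sqrt n$ with $\kappa>2$ is exactly what makes $K^2-4n\ge(\kappa^2-4)n>0$. So $K_p,K_q$ are well defined, computable in polynomial time by an integer square root, and the square root stays away from its singular point.

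\emph{Key estimate.} By the mean value theorem applied to $g(t)=\sqrt{t^2-4n}$ between $K$ and $s$,
$$
\left|\sqrt{s^2-4n}-\sqrt{K^2-4n}\right|=\frac{\xi}{\sqrt{\xi^2-4n}}\,|U|
$$
for some $\xi$ between $K$ and $s$. The factor $\frac{t}{\sqrt{t^2-4n}}$ is decreasing in $t$, so it is bounded by $\frac{\kappa}{\sqrt{\kappa^2-4}}$ as long as $\xi\ge\kappa\sqrt n$. This needs $s\ge\kappa\sqrt n$ too, which may fail: from $q<p<2q$ we only know $2\sqrt n< s<\frac3{\sqrt2}\sqrt n$. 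I would handle this as follows. Since $|U|<cs^{1-\delta}=o(\sqrt n)$, we have $s\ge K-|U|\ge\kappa'\sqrt n$ for any fixed $2<\kappa'<\kappa$ once $n$ is large enough. Then $\xi\ge\kappa'\sqrt n$ and the factor is bounded by the constant $C_\kappa=\kappa'/\sqrt{\kappa'^2-4}$. This yields
$$
|p-K_p|,\ |q-K_q|\le\frac{1+C_\kappa}2|U|+1<c\,s^{1-\delta}\le c'\,q^{1-\delta},
$$
using $s<3q$ and $q<p$ for the last step. So we obtain $\delta$-approximations of both $p$ and $q$. Alternatively, by Lemma \ref{lem:either} it is enough to produce one of them.

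\emph{Converse.} Given $\delta$-approximations of $p$ or $q$, Lemma \ref{lem:either} gives approximations $K_p,K_q$ of both. Then $K=K_p+K_q$ approximates $p+q$ with error at most $|U_p|+|U_q|<C(p^{1-\delta}+q^{1-\delta})\le 2C(p+q)^{1-\delta}$. The condition $K\ge\kappa\sqrt n$ is not needed in this direction.

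\emph{Main obstacle.} The delicate step is controlling the derivative of $\sqrt{t^2-4n}$ near $t=2\sqrt n$, which blows up there. This is precisely why $\kappa>2$ is assumed. The point needing care is transferring the lower bound from the known $K$ to the unknown $s$ and to the intermediate point $\xi$; the "large enough $n$" absorption above handles this. If that fails for small $n$, I would instead replace $\kappa$ by any $\kappa'\in(2,\kappa)$ and enlarge the constant.
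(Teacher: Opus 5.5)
Your proof is correct and has the same outline as the paper's: write $p$ via the quadratic formula, set $K_p=\frac{K+\sqrt{K^2-4n}}{2}$, bound $p-K_p$, round, and get the converse by adding the two approximations. Where you differ is the key estimate. The paper does not use the mean value theorem. It rationalizes,
\[
\sqrt{(K+U)^2-4n}-\sqrt{K^2-4n}=\frac{2KU+U^2}{\sqrt{(K+U)^2-4n}+\sqrt{K^2-4n}},
\]
and bounds the denominator below by $\sqrt{K^2-4n}\ge\sqrt{\kappa^2-4}\,\sqrt n$, simply discarding the unknown term $\sqrt{(p+q)^2-4n}=p-q>0$. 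This uses the hypothesis only on the known $K$. So the step you single out as the main obstacle never arises: there is no need to show that $s$ and the intermediate point $\xi$ stay away from $2\sqrt n$, and no ``$n$ large enough'' is needed. The price is an extra quadratic term $U^2/\sqrt{K^2-4n}=O(n^{1/2-\delta})$, which is of lower order than $n^{(1-\delta)/2}$. In exchange, your route gives an error exactly linear in $|U|$ with the explicit factor $\frac12\bigl(1+\kappa'/\sqrt{\kappa'^2-4}\bigr)$, and it handles $q$ directly instead of through Lemma \ref{lem:either}. One small point: your fallback for small $n$ is muddled, since replacing $\kappa$ by $\kappa'$ does not help there. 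What disposes of the small cases is that they are finitely many and can be absorbed into the constant. That is legitimate, because the constants in a $\delta$-approximation are only asserted to exist.
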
 

\begin{proof}
 Suppose we know a $\delta$-approximation of each  $p$ and  $q$,  say $p=K_p+U_p$ for $|U_p|<c_p p^{1-\delta}$ and $q=K_q+U_q$ for $|U_q|<c_q q^{1-\delta}$, and denote $C=\max\{c_p,c_q\}$. Then we have 
$p+q=K_{p+q}+U_{p+q}$ where  $K_{p+q}=K_p+K_q$ and 
$$
|U_{p+q}|=|U_p+U_q|\le |U_p|+|U_q|<C(p^{1-\delta}+q^{1-\delta})<c\left(p+q\right)^{1-\delta},
$$ 
for some constant $c$,  using  $p<2(p+q)/3$, $q<(p+q)/2$ by the conditions on the primes. 

\

Now suppose $p+q=K_{p+q}+U_{p+q}$ where $K_{p+q}$ is a known integer with $K_{p+q}>\kappa\sqrt n$, for some constant $\kappa>2$  and  $|U_{p+q}|<C(p+q)^{1-\delta}$ for some constant $C$. By Lemma \ref{lem:either} it is enough to have a $\delta$-approximation of one of the primes, say $p$. Then solving the equation $x^2-(p+q)x+n=0$ with solutions $p,q$, we get 
\begin{eqnarray*}
&&p=\frac{K_{p+q}+U_{p+q}+\sqrt{(K_{p+q}+U_{p+q})^2-4n}}{2}\\
&&=K_p+U_p
\end{eqnarray*}
where 
$$
K_p=\frac{K_{p+q}}{2}+\frac{\sqrt{K_{p+q}^2-4n}}{2},
$$
and 
$$
U_p=\frac{U_{p+q}^2+2K_{p+q}U_{p+q}}{2\left(\sqrt{(K_{p+q}+U_{p+q})^2-4n}+\sqrt{K_{p+q}^2-4n}\right)}+\frac{U_{p+q}}2
$$
Now, by the conditions on the primes, $C_1p\le K_{p+q}<C_2p$,  for some constants $C_1,C_2$, , so 
$U_{p+q}^2<|K_{p+q}U_{p+q}|<Cp^{2-\delta}$, for some constant $C$,  while 
\begin{eqnarray*}
&&\sqrt{(K_{p+q}+U_{p+q})^2-4n}+\sqrt{K_{p+q}^2-4n}>\sqrt{K_{p+q}^2-4n}\\
&&>(\sqrt{c^2-4})n^{1/2}>Cp,
\end{eqnarray*}
for some constant $C$, so 
$$
U_p<C(p^{1-\delta}+(p+q)^{1-\delta})<cp^{1-\delta}.
$$
for some constants $C,c$. The $\delta$-approximation then will be $p=K+U$ where 
$K=\left[K_p\right]$ and $U=K_p-K+U_p.$
\end{proof}
\begin{remark} If the $\delta$-approximation of $p+q$ is with any $K$ with no lower bound, then the same reasoging gives a $\delta/2$-approximation of $p$ and $q$. In this case we should use 

\begin{eqnarray*}
&&p=\frac{K_{p+q}+U_{p+q}+\sqrt{(K_{p+q}+U_{p+q})^2-4n}}{2}\\
&&=K_p+U_p
\end{eqnarray*}
where 
$$
K_p=\frac{K_{p+q}}{2}+\frac{\sqrt{K_{p+q}^2-4n}}{2},
$$
and 
\begin{eqnarray*}
U_p&=&\frac{U_{p+q}^2+2K_{p+q}U_{p+q}}{2\left(\sqrt{(K_{p+q}+U_{p+q})^2-4n}+\sqrt{K_{p+q}^2-4n}\right)}+\frac{U_{p+q}}2<\\
&&<\sqrt{U_{p+q}^2+2K_{p+q}U_{p+q}}<cp^{1-\delta/2}.
\end{eqnarray*}
for some constant $c$.

\end{remark}
\section{Experiments}\label{experiments}
All the experiments have been done in python using a personal computer, concretely a Mac book pro with 2GHz intel core i5 of $4$ kernels and $16$GB of memory. The program used is included in the Appendix \ref{program}
\begin{example} Let $n=123543221$, and $e=3782461$. $n^{1/4}=105.4277$, so assume $p+q=22130+R$, where $R<106$. So we consider the continued fraction of $\frac{e}{n-x}=\frac{3782461}{123521091}$ to get as its list of convergents
\begin{eqnarray*}
&&[(0, 1), (1, 32), (1, 33), (2, 65), (3, 98), (32, 1045), \\
&&(1187, 38763), (4780, 156097), (10747, 350957), \\
&&(15527, 507054), (57328, 1872119)]
\end{eqnarray*}
When reaching the $5$-th convergent, multiply by $e$ and take the integer part, we get $123520992$ and quotient $1$. Letting 
$$
123543221-123520992=22229=a,
$$ 
and solving the equation
$$
x^2-22230x+n,
$$
we get $p=11117,q=11113$. In this case $d>n^{0.373}$.
\end{example}
\begin{example} The next   example is  used by the authors of \cite{zhefenipa}. They need more than $9$ seconds to factor $n$. This algorithm takes miliseconds.
\begin{eqnarray*}
n &=&4974472773881384118014647795711547516478259956645613937
\\
&&6718994215049702937974938606611020238731850953950803408
\\
&&79242549924017891702063138282911199989251977\\
e&=&3362135212646195373732698994059691822491400319644492284\\
&&6372466250218118962134587236033368546938732470674860097\\
&&55286359769772827397995978362545037878464433\\
x&=&14279176227683147138044792728299966749821\times 10^{37}
\end{eqnarray*}

\vskip10pt

returns

\begin{eqnarray*}
&&4387254887864516616725764744278199052823488,\\
&& 6491196400927340728938155227779304732403793,\\
&&86
\end{eqnarray*} 
whose second coordinate is the secret exponent on the $86$ convergent, in miliseconds.     In this case the exponent is $d\approx n^{0.2785}$. 

\

Taking
\begin{eqnarray*}
e&=&4403407016802090047393873499278161087517856177983207067\\
&&7248297745308257030345671722133330755179411640118454952769\\
&&35054951436828450021261154418374318704225
\end{eqnarray*}
the program returns
\begin{eqnarray*}
&&574601190489207276358664128996126904791408049924536747334,\\ &&649119640092734072893815522777930473240379394857234598713,\\
&&110    
\end{eqnarray*}
whose second coordinate is the secret exponent on the $110$ convergent, also miliseconds.     In this case the exponent is $d\approx n^{0.369}$. 
\end{example}
\begin{example} This is also an example  used by the authors of \cite{zhefenipa}. While they need $17$ digits of the sum of the two primes, to get the exponent $d\approx n^{0.317}$, we get the same with $21$ digits. Observe that the $4$ digits of difference can be obtained by repeating the algorithm $10^4$ times.

Now take 
\begin{eqnarray*}
n&=&667881289802154038823628680614836895352147070157135359\\
&&517370786146585569663805767598976469948361102331675600\\
&&6175305149520572078488495000186844811399641203\\
e&=&481939619752981969109709347529972401122215764606900174\\
&&834517626107402697849846501416569215357100143887963073\\
&&4388003452670588809411338901610567571522175099\\
x&=&10^{25}y \text{ where }\\
y&=&16637133402080618303507029737134206559438091398368285
\end{eqnarray*}
\end{example}
\section{Conclusion} 

The main objective of the paper is to establish  an unconditional improvement on the size of the secret exponent which is vulnerable to Wiener's attack with continued fractions, knowing partial information on $p+q$. We also prove that knowing this information is equivalent to knowing  partial information on $p$ and $q$. We observe that up to $60$ bits, this information could be found by repeating the algorithm until succeed, which would happen in polynomial time. In particular  if $n$ is a modulus of $512$-bits, then running the algorithm   $2^{56}$ times , with consecutive brute force guesses of $x$, we could factor $n$ whenever $d<0.3$.  

\section{Appendix} \label{program}
The program first compute the gcd of two integer numbers, collecting the quotients on each step with the following instructions in python:

{\bf Computing the gcd} 

def mcd(a,b):

    a,b=max(a,b),min(a,b)

    q=[]

    while b!=0:

        q.append(a//b)

        c=a\%b

        a=b

        b=c

    return q

Then, given the quotients, we compute the sequence of numerators and denominators of the convergents by the formulas in (\ref{eq:nd}). For each convergent, we try to factor $n$ by guessing $k,d$ as numerator and denominator of the convergent,  with the Algorithm \ref{eq:alg}, with the following instructions:
 
 \indent q=mcd(n-x,e)
 
 \indent   num,den=[q[0],q[0]*q[1]+1],[1,q[1]]
  
    for j in range(2,len(q)):
  
        num.append(q[j]*num[j-1]+num[j-2])
  
        den.append(q[j]*den[j-1]+den[j-2])    
  
        for l in range(len(num)):
         	
	\hskip 10pt $L=(ed_i-1)/n_i$.

	\hskip 10pt import sympy as sym
	
	x=sym.symbols('x')

eq = sym.Eq(x**2 - 1, 0)

roots = sym.solve(ecuacion, x)

	if  isinstance(roots[0], int):
  
                print(roots)

\bibliographystyle{plain}
\bibliography{small}

\end{document}